 \definecolor{darkblue}{RGB}{0,0,160}
\DeclareSymbolFont{usualmathcal}{OMS}{cmsy}{m}{n}
\DeclareSymbolFontAlphabet{\mathcal}{usualmathcal}
\newtheorem{theorem}{Theorem}[section]
\newtheorem{lemma}[theorem]{Lemma}
\newtheorem*{definition*}{Definition}
\def\thang#1{\noindent
\textcolor{blue}
{\textsc{(Thang:}
\textsf{#1})}}
\def\paige#1{\noindent
\textcolor{purple}
{\textsc{(Paige:}
\textsf{#1})}}
\def\F{\mathcal{F}}
\def \F{{\mathbb F}}
\def\F{\mathbb{F}}
\begin{document}

\title{On a radial projection conjecture and pinned directions \\ in finite spaces}
\author{Paige Bright \and Ben Lund  \and Thang Pham }
\date{}

\maketitle

\begin{abstract}
    We give upper bounds on the number of exceptional radial projections of arbitrary subsets of vector spaces over finite fields.
    Our bounds do not depend on the dimension of the ambient space.

    Let $\F_q^d$ be the $d$-dimensional vector space over $\F_q$, let $k \in \{1,2,\ldots,d-1\}$, and let $E \subseteq \F_q^d$ be an arbitrary set of points. 
    We prove two results.
    First, if $q^{k-1} < |E| \leq 100^{-1}q^{k}$, then the number of points $y$ such that the projection of $E$ from $y$ contains fewer than $50^{-1}|E|$ points is bounded above by $40q^k$.
    This establishes a conjecture of Lund, Pham, and Thu.
    Second, if $30q^{k} \leq |E| \leq q^{k+1}$, then the number of points $y$ such that the projection of $E$ from $y$ contains fewer than $M \leq 4^{-1}q^k$ points is bounded above by $300q^kM|E|^{-1}$. 
    We also have an application to a pinned directions problem.
    Specifically, if $E\subset \mathbb{F}_q^d$ with $|E| > 30q^k$, then there is a point $y \in E$ such that the set of lines incident to $y$ and at least one other point of $E$ determines $q^k/4$ distinct slopes.

\end{abstract}

\section{Introduction}
For any point $y \in \F^d$, the {\em radial projection} $\pi^y$ maps each point $x \in \F^d \setminus \{y\}$ to the line that contains $x$ and $y$.
For $E \subset \F^d$, we denote $\pi^y(E) = \bigcup_{x \in E \setminus \{y\}}\pi^y(x)$.

For $\F = \F_q$, the finite field with $q$ elements, there are two obvious upper bounds on $|\pi^y(E)|$.
First, $|\pi^y(E)| \leq |E|$.
Second, $|\pi^y(E)|$ is at most the number of lines through $y$, which is $\frac{q^d-1}{q-1}$.
More generally, if $E$ is contained in an affine $k$-plane $\Gamma$, then $|\pi^y(E)| \leq \frac{q^k-1}{q-1}$ for each point $y \in \Gamma$.

A projection $\pi^y$ is {\em exceptional} for a set $E$ if $\pi^y(E)$ is much smaller than these trivial upper bounds.
In this paper, we bound the number of exceptional projections for arbitrary sets $E \subseteq \F_q^d$.


Our first result concerns the number of projections whose image of $E$ is  smaller than $|E|$ by at least a constant factor.
\begin{theorem}\label{conj:ff}
Let $k \in \{1,2,\ldots,d-1\}$. Let $E \subset \mathbb{F}_q^d$ with $q^{k-1} \leq |E| \leq 100^{-1}q^k$.
Then,
\begin{equation}\label{eq:conjFF}\#\{y \in \mathbb{F}_q^d: |\pi^y(E)| \leq 50^{-1}|E|\} \leq 40q^k . \end{equation}
\end{theorem}
\cref{conj:ff} was conjectured by Lund, Pham, and Thu \cite[Conjecture 1.6]{LPT}, and they proved the case $k=d-1$ \cite[Theorem 1.4]{LPT}.
As mentioned above, if $E$ is contained in an affine $k$-plane $\Gamma$, then, for any $y \in \Gamma$, we have that $|\pi^y(E)| \leq \frac{q^{k} - 1}{q-1} < 2|E|$.
Consequently, \cref{conj:ff} is tight up to constant factors for each fixed $k$.
In fact, this example shows more.
Even if we replace the bound $50^{-1}|E|$ on the left hand side of \cref{eq:conjFF} by $\frac{q^k-1}{q-1}$, we still cannot hope to reduce the $40q^{k}$ on the right hand side by more than a constant factor.

However, we do obtain an improved bound on the number of projections of $E$ whose image is smaller than $q^{k}$ in the case that the size of $E$ is so large that it cannot be contained in a $k$-plane.
The case $k=d-1$ of \cref{th:fullDimLargeESC} was proved by Lund, Pham, and Thu \cite[Theorem 1.1]{LPT}.

\begin{theorem}\label{th:fullDimLargeESC}
Let $k \in \{1,2,\ldots,d-1\}$.
Let $E \subset \mathbb{F}_q^d$, with $30q^k \leq |E| \leq q^{k+1}$, and let $M$ be an integer with $|E|q^{-1} \leq M \leq 4^{-1} q^{k}$.
Then,
\begin{equation} \label{eq:fullDimLargeESC}
\#\{y\in \mathbb{F}_q^d\colon |\pi^y(E)|\leq M\} <  300q^{k}M|E|^{-1}.
\end{equation}
\end{theorem}

Since each line contains at most $q$ points, we have the trivial lower bound $|\pi^y(E)| \geq |E|q^{-1}$.
Hence, for $M < |E|q^{-1}$, we obtain the equality
$\{y \in \F_q^d: |\pi^y(E)| \leq M\} = 0.$

\subsection{An application to pinned directions}
Another perspective on this study, as mentioned in \cite{LPT}, is that it gives a pinned direction set result. In the plane over the reals, Ungar \cite{Ungar}  proved that for any finite set $E$ of points in $\mathbb{R}^2$, we have either $E$ is contained in a single line, or $E$ determines at least $|E|-1$ distinct directions. The pinned version of this result was given by Beck in \cite{Beck}, namely, he showed that either most points of $E$ are contained in a single line, or there is a point $x \in E$ such that $|\pi^x(E)| \gg |E|$. When we replace $\mathbb{R}$ by $\mathbb{F}_p$, where $p$ is a prime, Sz\H{o}nyi \cite{Szonyi} proved that if $E \subset \mathbb{F}_p^2$ is a set of points not contained in a line and $|E| \leq p$, then $E$ determines at least $(|E|+3)/2$ directions. The results in this paper can be viewed as pinned versions of this topic in all dimensions and over arbitrary finite fields. In particular, we include the following application of Theorem \ref{th:fullDimLargeESC}.
\begin{theorem}
Let $k\in \{1, 2, \ldots, d-1\}$ and let $E\subset \mathbb{F}_q^d$. 
If $|E| > 30q^k$, then there is a point $y \in E$ such that $|\pi^y(E)| \ge q^k/4$.
\end{theorem}
\begin{proof}
    Let $T = \{z \in \F_q^d: |\pi^z(E)| \leq q^k/4\}$.
    By \cref{th:fullDimLargeESC}, we have that $|T| \leq 75q^{2k}|E|^{-1} < |E|$.
    Hence, $E \setminus T$ is nonempty.
\end{proof}
\subsection{Main ideas}
The proofs of Theorems~\ref{conj:ff} and~\ref{th:fullDimLargeESC} share a common strategy that distinguishes our approach from the incidence-based arguments developed in~\cite{LPT}: we dimension-reduce the problem by finding a favorable low-dimensional projection that approximately preserves the sizes of both the point set and the exceptional set.

The key technical ingredient is Lemma~\ref{th:randomOrthogonalProjection}, which guarantees that for any two sets $A,B \subset \mathbb{F}_q^d$ with $|A|,|B| < q^{k+1}$, there exists a $(d-k-1)$-dimensional subspace $\Gamma$ such that the linear projection
\[
\pi_\Gamma : \mathbb{F}_q^d \to \mathbb{F}_q^d / \Gamma \cong \mathbb{F}_q^{k+1}
\]
preserves at least a constant fraction of the cardinalities of both sets:
\[
|\pi_\Gamma(A)| \ge \tfrac15 |A|
\quad\text{and}\quad
|\pi_\Gamma(B)| \ge \tfrac15 |B|.
\]
The proof uses a probabilistic averaging and second-moment argument. In particular, we count pairs of points that collide under projection in two ways: first by averaging over subspaces, which controls the expected number of collisions, and then by applying Markov’s inequality and Cauchy–Schwarz to show that a positive proportion of subspaces avoid excessive collisions, leading to the desired lower bounds for $|\pi_\Gamma(A)|$ and $|\pi_\Gamma(B)|$.

The main step in the proof of Theorem~\ref{th:fullDimLargeESC} is to apply Lemma~\ref{th:randomOrthogonalProjection} simultaneously to $E$ and to the exceptional set
\[
T = \{ y \in \mathbb{F}_q^d : |\pi^y(E)| \le M \}.
\]
At first, we do not know if $|T|$ is small enough to apply Lemma~\ref{th:randomOrthogonalProjection}. This difficulty is overcome using Lemma~\ref{th:weakProjBound}, taken from~\cite{LPT}, which provides a crude but sufficient upper bound
\[
|T| < (C-1)^{-1} q |E|,
\]
whenever $M < C^{-1} |E|$. Under the hypotheses of Theorem~\ref{th:fullDimLargeESC}, this bound ensures $|T| < q^{k+1}$, thereby placing both $E$ and $T$ in the range where Lemma~\ref{th:randomOrthogonalProjection} applies.

Once we have found a favorable subspace $\Gamma$, the following key geometric observation allows us to relate exceptional projections in $\mathbb{F}_q^d$ to exceptional projections in $\mathbb{F}_q^{k+1}$. For each $y \in T$, let $L_y$ be the set of lines through $y$ that intersect $E$. Under the projection $\pi_\Gamma$, each line in $L_y$ either maps to a line through $\pi_\Gamma(y)$ that intersects $\pi_\Gamma(E)$ or collapses to the point $\pi_\Gamma(y)$ itself. In particular, the number of distinct lines through $\pi_\Gamma(y)$ intersecting $\pi_\Gamma(E)$ is at most
\[
|L_y| = |\pi^y(E)| \le M.
\]
Hence
\[
\pi_\Gamma(T) \subseteq \bigl\{ w \in \mathbb{F}_q^{k+1} : |\pi^w(\pi_\Gamma(E))| \le M \bigr\}.
\]
This reduces the problem to the $(k+1)$-dimensional space $\mathbb{F}_q^{k+1}$, where Theorem~\ref{th:largeESC}, taken from~\cite{LPT} and now applied in dimension $k+1$, provides the desired sharp bound on the number of exceptional points. Comparing this upper bound for $|\pi_\Gamma(T)|$ with the lower bound $|\pi_\Gamma(T)| \ge \tfrac15 |T|$ given by Lemma~\ref{th:randomOrthogonalProjection} yields the estimate asserted in Theorem~\ref{th:fullDimLargeESC}.

The proof of Theorem~\ref{conj:ff} follows the same blueprint, but uses \cref{th:smallESC} (also taken from \cite{LPT}) in place of \cref{th:largeESC}.

Overall, the dimension-reduction mechanism provided by Lemma~\ref{th:randomOrthogonalProjection} allows us to lift the codimension-$1$ estimates of~\cite{LPT} to arbitrary codimensions without introducing any dependence on the ambient dimension $d$ in our bounds. 

The continuous analogs of \cref{conj:ff} and \cref{th:fullDimLargeESC} were recently proved in two papers, by Bright and Gan \cite{BG}, and by Orponen, Shmerkin, and Wang \cite{OSW}.

\subsection{Discussions}
In this subsection, we emphasize a connection between the topic studied in this paper and the Erd\H{o}s-Falconer distance problem, which asks for the smallest exponent $\alpha$ such that for any $E\subset \mathbb{F}_q^d$, if $|E|\ge q^{\alpha}$, then the distance set covers a positive proportion of all distances. This is the finite field analog of the celebrated Falconer distance conjecture in the continuous setting, stating that if the Hausdorff dimension of a compact set $E\subset \mathbb{R}^d$ is greater than $d/2$, then its distance set has positive Lebesgue measure.

In the finite field setting, Iosevich and Rudnev \cite{IR07} initially showed that if $|E|\gg q^{\frac{d+1}{2}}$, then $E$ determines a positive proportion of all distances. The exponent $\frac{d+1}{2}$ has been shown to be optimal in \cite{HIKR10} in odd dimensions. In even dimensions, the conjectured exponent is $d/2$, which is directly in line with the Falconer distance conjecture. The best current exponents in the literature for the plane case are $\frac{5}{4}$ and $\frac{4}{3}$ due to Murphy, Petridis, Pham, Rudnev, and Stevens \cite{murphy} and Chapman, Erdogan, Hart, Iosevich, and Koh \cite{CEHIK10}, respectively. In higher even dimensions, no exponent better than \(\frac{d+1}{2}\) is currently known. However, in the continuous setting, the whole picture is much clearer. For example, Du, Iosevich, Ou, Wang, and Zhang \cite{Du3} proved that for any compact set $E$ in $\mathbb{R}^d$, $d\ge 4$ even, with the Hausdorff dimension of at least $\frac{d}{2}+\frac{1}{4}$, its distance set has positive Lebesgue measure. One of the key steps in their paper is a radial projection theorem developed in \cite{o2}, thus, extending these methods from $\mathbb{R}^d$ to $\mathbb{F}_q^d$ requires a thorough understanding of radial projections over finite fields. 
In other words, our results contribute further to this line of research.

\section{Notation, terminology, and basic facts about finite geometry}

The number of $k$-dimensional linear subspaces of $\mathbb{F}_q^d$ is
\[ \binom{d}{k}_q = \frac{(q^d-1)(q^d-q)\ldots(q^d-q^{k-1})}{(q^k-1)(q^k-q)\ldots(q^k-q^{k-1})}.\]
Indeed, the numerator counts the number of ordered $k$-tuples of  linearly independent vectors in $\F_q^d$, and the denominator counts the number of ordered $k$-tuples of linearly independent vectors in $\F_q^k$.
Similarly, the number of $k$-dimensional subspaces that contain a fixed $\ell$-dimensional subspace (for $\ell \leq k$) is $\binom{d-\ell}{k-\ell}_q$.

For a linear subspace $\Gamma$ in $ \F_q^d$ and a point $x \in \F_q^d$, denote
$\pi_\Gamma(x) = \Gamma + x$.
For a set of points $S \subseteq \F_q^d$, denote
$\pi_\Gamma(S) = \bigcup_{x \in S} \pi_\Gamma(x)$.
In our application, $\Gamma$ will be $(d-k-1)$-dimensional.
Under this assumption, the space $\pi_\Gamma(\F_q^d)$ of translates of $\Gamma$ is a $k+1$-dimensional vector space over $\F_q$ via $\mathbb{F}_q^d/\Gamma\cong \mathbb{F}_q^{k+1}$, and we consider the projections $\pi_\Gamma(x)$ of points in $\F_q^d$ to be elements of $\F_q^{k+1}$.
For $y \in \F_q^{k+1}$, we denote $\pi_\Gamma^{-1}(y)$ to be the set of points $x$ such that $\pi_\Gamma(x) = y$; note that $\pi_\Gamma^{-1}(y)$ is the set of points in an affine $(d-k-1)$-plane.
Similarly, for a set $S \subseteq \F_q^{k+1}$, we denote $\pi_\Gamma^{-1}(S) = \bigcup_{y \in S} \pi_\Gamma^{-1}(y)$.

\section{Proof of \cref{conj:ff,th:fullDimLargeESC}}
To prove our theorems, we first find a $(d-k-1)$-dimensional subspace $\Gamma$ such that $\pi_\Gamma$ nearly preserves the size of $E$ and the size of $T = \{y \in \F_q^d: |\pi^y(E)| \le M\}$.



We show that if neither $|E|$ nor $|T|$ is too large, then there is a projection that roughly preserves the size of both sets. The following lemma is of independent interest, and is expected to have more applications in other topics.

\begin{lemma}\label{th:randomOrthogonalProjection}
    Let $A,B \subset \F_q^d$, with $|A| < q^{k+1}$ and $|B| < q^{k+1}$. 
    Then, there exists an $(d-k-1)$-dimensional subspace $\Gamma$ such that $|\pi_\Gamma(A)| \geq 5^{-1}|A|$ and $|\pi_\Gamma(B)| \geq 5^{-1}|B|$.
\end{lemma}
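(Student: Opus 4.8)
The plan is to choose $\Gamma$ at random, uniformly among all $(n-k-1)$-dimensional linear subspaces of $\F_q^n$, and to show that with positive probability it works for $A$ and $B$ simultaneously. The only obstruction to $\pi_\Gamma$ preserving the size of a set $S$ is the presence of pairs of distinct points identified by the projection, and by the definition of $\pi_\Gamma$ we have $\pi_\Gamma(x)=\pi_\Gamma(x')$ precisely when $x-x'\in\Gamma$. I would therefore introduce the collision count
\[
P(S,\Gamma)=\#\{(x,x')\in S\times S : x\neq x',\ x-x'\in\Gamma\}
\]
and control $|\pi_\Gamma(S)|$ through it. Writing $N_c$ for the number of points of $S$ lying in a coset $c$ of $\Gamma$, Cauchy--Schwarz gives $|S|^2=\bigl(\sum_c N_c\bigr)^2\le |\pi_\Gamma(S)|\sum_c N_c^2$, and since $\sum_c N_c^2=|S|+P(S,\Gamma)$ this yields the clean bound $|\pi_\Gamma(S)|\ge |S|^2/(|S|+P(S,\Gamma))$. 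Consequently $|\pi_\Gamma(S)|\ge 5^{-1}|S|$ follows as soon as $P(S,\Gamma)\le 4|S|$, and the whole problem reduces to controlling the collision count.

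The second step is a first-moment estimate on $P(S,\Gamma)$. For a fixed nonzero $v$, transitivity of $GL_n(\F_q)$ on nonzero vectors shows that $\Pr[v\in\Gamma]$ is independent of $v$, and double counting against the Gaussian binomials from Section 2 gives $\Pr[v\in\Gamma]=(q^{n-k-1}-1)/(q^n-1)$, which a short manipulation bounds by $(q^{k+1}-1)^{-1}$. Summing over the at most $|S|(|S|-1)$ ordered pairs in $S$ and using the hypothesis $|S|<q^{k+1}$ then gives $\mathbb{E}[P(S,\Gamma)]\le |S|(|S|-1)/(q^{k+1}-1)<|S|$ for $S\in\{A,B\}$.

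To finish I would combine the two ingredients by the probabilistic method. Markov's inequality applied to the first-moment bound gives $\Pr[P(S,\Gamma)>4|S|]<1/4$ for each of $S=A$ and $S=B$, so by a union bound the probability that $P(A,\Gamma)\le 4|A|$ and $P(B,\Gamma)\le 4|B|$ both hold is at least $1/2>0$; any $\Gamma$ in this event satisfies the conclusion of the lemma via the Cauchy--Schwarz bound of the first step.

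The one genuinely delicate point is the numerology, and it is exactly here that the Cauchy--Schwarz step is essential rather than cosmetic. The naive approach of bounding the number of lost points of $S$ directly by $P(S,\Gamma)$ only produces an expected loss of order $|S|$, from which Markov yields no nontrivial lower bound on $|\pi_\Gamma(S)|$ at all; high-multiplicity cosets make the pair count overcount the loss too badly. Passing instead through the second moment $\sum_c N_c^2$ converts the additive pair bound $P(S,\Gamma)\le 4|S|$ into the multiplicative guarantee $|\pi_\Gamma(S)|\ge 5^{-1}|S|$, and it is this conversion that makes the constants close with enough slack to absorb the union bound over the two sets.
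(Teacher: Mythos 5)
Your proposal is correct and follows essentially the same argument as the paper: a uniformly random $(n-k-1)$-dimensional subspace, an expected collision-count bound of $\frac{q^{n-k-1}-1}{q^n-1}$ per pair obtained from the Gaussian binomial counts, Markov's inequality with a union bound over $A$ and $B$, and Cauchy--Schwarz to convert the pair bound into $|\pi_\Gamma(S)|\geq 5^{-1}|S|$. The only differences are presentational (you count ordered pairs via the condition $x-x'\in\Gamma$ and bound the expectation by $|S|$ before applying Markov, whereas the paper counts unordered pairs via incidences with affine planes and applies Markov relative to the expectation), and the constants come out the same.
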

\begin{proof}
    Let $X \subset \F_q^d$, with $|X| < q^{k+1}$.
    Let $R$ be the set of triples $(\Lambda,u,v)$ where $\Lambda$ is an affine $(d-k-1)$-plane and $u$ and $v$ are distinct points of $X$ that are contained in $\Lambda$.
    We count $|R|$ in two different ways.
    First, we take the sum over $(d-k-1)$-planes $\Lambda$ of the number of pairs of points of $X$ contained in $\Lambda$.
    Second, we take the sum over pairs of points $P$ of $X$ of the number of $(d-k-1)$-planes that contain $P$.
    In detail,
    \[|R| = \sum_{\Gamma \in G(d,d-k-1)} \sum_{y \in \F_q^{k+1}} \binom{|X \cap \pi_\Gamma^{-1}(y)|}{2} = \binom{d-1}{d-k-2}_q \binom{|X|}{2}. \]
    Note that
    \[ |G(d,d-k-1)| = \binom{d}{d-k-1}_q = \frac{q^{d}-1}{q^{k+1}-1}\binom{d-1}{d-k-1}_q = \frac{q^{d}-1}{q^{d-k-1}-1}\binom{d-1}{d-k-2}_q. \]
    Hence, the expected number of pairs of distinct points of $X$ with the same image under $\pi_\Gamma$ for a uniformly random $\Gamma$ is
    \[E_{\Gamma}\left(\sum_{y \in \F_q^{k+1}} \binom{|X \cap \pi^{-1}_\Gamma(y)|}{2}\right) = |R|\,|G(d,d-k-1)|^{-1} = \frac{q^{d-k-1}-1}{q^d-1}\binom{|X|}{2}.\]
    By Markov's inequality, the following condition is satisfied for at least $(3/4)|G(d,d-k-1)|$ of the $(d-k-1)$-dimensional subspaces $\Gamma$:
    \begin{equation}\label{eq:projectionCondition}
    \sum_{y \in \F_q^{k+1}} \binom{|X \cap \pi^{-1}_\Gamma(y)|}{2} \leq 4  \cdot \frac{q^{d-k-1}-1}{q^d-1}\binom{|X|}{2}.
    \end{equation}
    In particular, \cref{eq:projectionCondition} is simultaneously satisfied for $X=A$ and $X=B$ for at least half of the $(d-k-1)$-dimensional subspaces $\Gamma$.

    Now suppose that \cref{eq:projectionCondition} is satisfied for a fixed set $X$ and $(d-k-1)$-dimensional subspace $\Gamma$, and for each $y \in \F_q^{k+1}$, denote $X(y) = |X \cap \pi^{-1}_\Gamma(y)|$.
    Using the assumption that $|X| \leq q^{k+1}$,
    \[\sum_{y \in \F_q^{k+1}} X(y)^2 = 2 \sum_{y \in \F_q^{k+1}} \binom{X(y)}{2} + \sum_{y \in \F_q^{k+1}} X(y) < 8q^{-k-1} \binom{|X|}{2} + |X| \leq 5|X|. \]
    Combining this with the Cauchy-Schwarz inequality, we have
    \[|X|^2 = \left(\sum_{y \in \F_q^{k+1}} X(y) \right)^2 \leq |\pi_\Gamma(X)|\sum_{y \in \Gamma^\perp}X(y)^2 \leq 5|\pi_\Gamma(X)|\,|X|. \]
    Hence, $|\pi_\Gamma(X)| \geq 5^{-1}|X|$.
    As noted above, there is a choice of $\Gamma$ so that \cref{eq:projectionCondition} is simultaneously satisfied for both $A$ and $B$.
\end{proof}

In order to apply \cref{th:randomOrthogonalProjection} to $T$, we need to already know that $|T|$ is not too large.
The following lemma taken from \cite[Theorem 1.5]{LPT} is sufficient for this purpose.

\begin{lemma}[\cite{LPT}]\label{th:weakProjBound}
Let $E \subset \F_q^d$ and let $1 < C < |E|$. Then,
\[\#\{y \in \F_q^d : |\pi^y(E)| < C^{-1}|E| \} < (C-1)^{-1}q|E|. \]
\end{lemma}
The proof of \cref{th:fullDimLargeESC} also makes use of the result for the case $k=d-1$, which was proved in  \cite[Theorem 1.1]{LPT}.
\begin{theorem}[\cite{LPT}]\label{th:largeESC}
Let $E \subset \mathbb{F}_q^d$ and let $M$ be a positive integer, with $|E| \geq 6q^{d-1}$ and $M \leq 4^{-1}q^{d-1}$.
Then,
\begin{equation} \label{eq:largeESC}
\#\{y\in \mathbb{F}_q^d\colon |\pi^y(E)|\leq M\} <  12q^{d-1}M|E|^{-1}.\end{equation}
\end{theorem}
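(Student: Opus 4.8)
The plan is to treat the condition $|\pi^y(E)| \le M$ geometrically: it says exactly that $E \setminus \{y\}$ is covered by at most $M$ lines through $y$, i.e. $E$ is contained in a ``cone'' $C_y$ that is a union of $\le M$ lines through $y$ (so $|C_y| \le Mq$). Write $B = \{y : |\pi^y(E)| \le M\}$ for the bad set I want to bound. The first, crude attempt is a double count of collinear triples: let $T = \#\{(y,x_1,x_2) : y \in B,\ x_1 \ne x_2 \in E,\ y,x_1,x_2 \text{ collinear}\}$. For a fixed bad $y$ the points of $E$ split among $\le M$ lines through $y$, so by convexity the number of collinear $E$-pairs through $y$ is $\gtrsim |E|^2/M$; hence $T \gtrsim |B|\,|E|^2/M$. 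On the other hand each pair $x_1,x_2 \in E$ lies on a unique line with $q$ points, so summing over all $y$ gives $T \le \binom{|E|}{2} q$. Together these yield only $|B| \lesssim qM$, which is about a factor of $q$ weaker than the target $12 q^{d-1} M /|E|$ (recall $|E| \ge 6q^{d-1}$ forces the target below $2M$). The whole difficulty is to recover this factor of $q$, and the reason the crude bound is lossy is that the upper bound ranges over all $q^d$ choices of $y$, whereas the bad points are few and highly structured.

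The key structural input I would use is a cone-intersection lemma: if $y \ne y'$ both lie in $B$, then all but at most $M^2$ points of $E$ lie on the single line $\overline{yy'}$. Indeed, a point of $E$ off $\overline{yy'}$ must be the intersection of a line through $y$ meeting $E$ and a distinct line through $y'$ meeting $E$; there are at most $M$ of each, and two distinct such lines meet in at most one point, giving at most $M^2$ points of $E$ off $\overline{yy'}$. Consequently any line carrying two bad points is \emph{rich}: it contains at least $|E| - M^2$ points of $E$. Two distinct rich lines share at most one point, so counting the $\binom{|E|}{2}$ pairs of $E$-points shows there are very few rich lines once $M^2$ sits safely below $|E|$; moreover every pair of bad points lies on a rich line, so $\binom{|B|}{2} = \sum_{\ell \text{ rich}} \binom{|B \cap \ell|}{2}$.

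From here the plan is to bound $|B|$ by controlling (i) the number of rich lines and (ii) the number of bad points on each rich line, feeding these into the identity above together with the elementary per-point bound $|\pi^y(E)| \ge (|E|-1)/(q-1)$ (which already forces $M \gtrsim |E|/q$ whenever a bad point exists, so that the target is at least $\approx 12 q^{d-2}$ on the event $B \ne \emptyset$). The main obstacle is the regime where $M$ is comparable to or larger than $\sqrt{|E|}$: there the richness threshold $|E| - M^2$ degrades and the clean ``few rich lines'' count breaks down, while a single rich line can a priori carry as many as $q$ bad points. Resolving this regime is precisely where I expect the hypotheses $|E| \ge 6q^{d-1}$ and $M \le \tfrac14 q^{d-1}$ to enter quantitatively — the former prevents $E$ from being swallowed by a bounded number of lines and keeps the rich-line count under control, and the latter keeps $M$ well below the total number $\tfrac{q^d-1}{q-1}$ of directions, so that bad points stay genuinely atypical. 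I would therefore split into the small-$M$ case (where $M^2 < |E| - q$ makes it impossible for any line to carry two bad points, hence $|B| \le 1$) and the complementary large-$M$ case, and I expect the careful accounting of bad points per rich line in the latter to be the crux of the argument.
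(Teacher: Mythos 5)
Your argument, as written, only proves the theorem in the regime $M^2 < |E| - q$, and that regime is a negligible corner of the theorem's actual range of parameters. The hypotheses allow $M$ as large as $4^{-1}q^{d-1}$ while $|E|$ may be as small as $6q^{d-1}$; for instance in $d=2$ one can have $|E| = 6q$ and $M = q/4$, so that $M^2 \approx q^2/16 \gg |E|$. In this regime your cone-intersection lemma, while correctly proved, is vacuous: the conclusion ``at most $M^2$ points of $E$ lie off the line $\overline{yy'}$'' says nothing when $M^2 \geq |E|$, the richness threshold $|E|-M^2$ is negative, and the identity $\binom{|B|}{2} = \sum_{\ell \text{ rich}} \binom{|B\cap \ell|}{2}$ collapses. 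Worse, the structure you hope for genuinely fails there, not just your bound for it: take two points $y,y'$, choose $q/4$ lines through each (avoiding $\overline{yy'}$), and let $E$ be $6q$ of the roughly $q^2/16$ pairwise intersection points of the two pencils. Then $y$ and $y'$ are both bad for $M = q/4$, yet the line $\overline{yy'}$ contains \emph{no} point of $E$ at all. So no accounting of ``bad points per rich line'' can close the large-$M$ case; a fundamentally different idea is needed, and your proposal explicitly defers exactly this case (``I expect the careful accounting \dots to be the crux''). A plan whose admitted crux is unresolved is not a proof.

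For calibration: the present paper does not prove this statement at all --- it is imported from \cite{LPT} (where it is established by a different, global counting argument rather than by pairwise comparison of cones) and used as a black box, so the only parts of your write-up that can be credited are the ones you completed. Those parts are fine: the crude collinear-triple count giving $|B| \lesssim qM$, the per-point bound $|\pi^y(E)| \geq (|E|-1)/(q-1)$ ensuring the right-hand side of \eqref{eq:largeESC} exceeds $1$ whenever $B \neq \emptyset$ (with $d \geq 2$), and the observation that $M^2 < |E|-q$ forces $|B| \leq 1$ are all correct. But they amount to the easy part of the statement; the theorem's content lives precisely where your approach breaks down.
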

We are now ready to prove \cref{th:fullDimLargeESC}.

\begin{proof}[Proof of \cref{th:fullDimLargeESC}]
Denote $|E| = cq^k \geq 30q^k$, and
\[T := \{y \in \F_q^d: |\pi^y(E)| \le M\}. \]
Since $M < 4^{-1}q^{k} = (4c)^{-1}|E|$, \cref{th:weakProjBound} implies that $|T| \leq (4c-1)^{-1}q|E| < 3^{-1}q^{k+1}$.
Since both $|E| \leq q^{k+1}$ and $|T| \leq q^{k+1}$, \cref{th:randomOrthogonalProjection} implies that there is an $(d-k-1)$-dimensional subspace $\Gamma$ such that $\pi_\Gamma(T) \geq 5^{-1}|T|$ and $\pi_\Gamma(E) \geq 5^{-1}|E|$.

For $y \in T$, let $L_y$ be the set of lines that contain $y$ and at least one point of $E$, and let $L_y^\pi$ be the set of lines that contain $\pi_\Gamma(y)$ and at least one point of $\pi_\Gamma(E)$.
Note that $|L_y| \geq |L^\pi_y|$.
Indeed, the image of each line $\ell \in L_y$ under $\pi_\Gamma$ is either a line in $L^\pi_y$ or the point $\pi(y)$.
Hence, $|\pi^{\pi_\Gamma(y)}(\pi_\Gamma(E))| = |L^\pi_y| \leq |L_y| \leq M$, and so 
\[\pi_\Gamma(T) \subseteq \{w \in \F_q^{k+1} : |\pi^w(\pi_\Gamma(E))| \leq M\}.\]
Since $|\pi_{\Gamma}(E)| \geq 5^{-1}|E| \geq 6q^k$ and $M \leq 4^{-1}q^{k}$, we can apply \cref{th:largeESC} to obtain  \[5^{-1}|T| \leq |\pi_\Gamma(T)| \leq \#\{w \in \F_q^{k+1}: |\pi^w(\pi_\Gamma(E))| \leq M\} \leq 12q^kM|\pi_\Gamma(E)|^{-1} \leq 60q^kM|E|^{-1}.\]
This immediately implies the claimed bound $|T| \leq 300q^kM|E|^{-1}$.
\end{proof}

The proof of \cref{conj:ff} is similar to that of \cref{th:fullDimLargeESC}, but we use the following in place of \cref{th:largeESC}.
This was proved in \cite[Theorem 1.4]{LPT}.

\begin{theorem}[\cite{LPT}]\label{th:smallESC}
    Let $E \subset \F_q^d$ with $|E| < 100^{-1}q^{d-1}$. Then,
    \[\#\{y \in \F_q^d: |\pi^y(E)| \leq 10^{-1}|E| \} \leq 8q^{d-1}. \]
\end{theorem}

We are now ready to prove \cref{conj:ff}.

\begin{proof}[Proof of \cref{conj:ff}]
Define 
\[T:=\{y\in \mathbb{F}_q^d\colon |\pi^y(E)|<|E|/50\}.\]
We want to show that $|T|<40q^k$. 

Since $|E| < q^k$, \cref{th:weakProjBound} implies that $|T| < 49^{-1}q|E| < q^{k+1}$.
Hence, if $k < d-1$, then \cref{th:randomOrthogonalProjection} implies that there exists a $(d-k-1)$-dimensional subspace $\Gamma$ such that $|\pi_\Gamma(T)|\ge |T|/5$ and $|\pi_\Gamma(E)|\ge |E|/5$.
If $k=d-1$, we may take $\Gamma = 0$ and obtain the same conclusion, since $\pi_{0}(E) = E$ and $\pi_{0}(T) = T$ by definition.

As the proof of \cref{th:fullDimLargeESC}, we know that 
\[\pi_\Gamma(T)\subseteq \left\lbrace w\in \mathbb{F}_q^{k+1}\colon |\pi^w(\pi_\Gamma(E))|\le |E|/50\right\rbrace \subseteq \left\lbrace w\in \mathbb{F}_q^{k+1}\colon |\pi^w(\pi_\Gamma (E))|\le |\pi_\Gamma(E)|/10\right\rbrace.\]

Since $\pi_\Gamma(E) \subset \F_q^{k+1}$ with $|\pi_\Gamma(E)| \leq |E| \leq 100^{-1}q^k$, \cref{th:smallESC} implies that 
implies that 
\[|\pi_\Gamma(T)| \leq \#\lbrace w\in \mathbb{F}_q^{k+1}\colon |\pi^w(\pi_\Gamma (E))|\le 10^{-1}|\pi_\Gamma(E)|\rbrace \leq 8q^k.\]
Hence, $|T| \leq 5|\pi_\Gamma(T)| \leq 40q^k$.
\end{proof}

\section{Acknowledgements}
P. Bright was supported by the MathWorks fellowship at MIT. B. Lund was partially supported by the Institute for Basic Science (IBS-R029-C1). T. Pham was partially supported by ERC grant ``GeoScape'', no. 882971, under Prof. J\'{a}nos Pach.

B. Lund and T. Pham would like to thank to the VIASM for the hospitality and for the excellent working condition. 
 

 \bibliographystyle{amsplain}

\end{document}